\documentclass[11pt,reqno]{amsart}
\usepackage[T1]{fontenc}
\usepackage{lmodern}
\usepackage{microtype}
\usepackage{amsmath,amssymb,amsthm,mathtools}
\usepackage{hyperref}
\usepackage[nameinlink,noabbrev]{cleveref}
\hypersetup{colorlinks=true,linkcolor=blue,citecolor=blue,urlcolor=blue,pdftitle={Fixed-perimeter Franklin statistics and an eventual inequality},pdfauthor={Marcus McCrea}}
\newtheorem{theorem}{Theorem}[section]
\newtheorem{lemma}[theorem]{Lemma}
\newtheorem{proposition}[theorem]{Proposition}
\newtheorem{corollary}[theorem]{Corollary}
\theoremstyle{remark}
\newtheorem{remark}[theorem]{Remark}
\newcommand{\FO}{\mathrm{FO}}
\newcommand{\FD}{\mathrm{FD}}
\newcommand{\cD}{\mathcal{D}}
\newcommand{\cO}{\mathcal{O}}
\title[Fixed-perimeter Franklin statistics]{Fixed-perimeter Franklin statistics and an eventual inequality}
\author{Marcus McCrea}
\address{Calgary, Alberta, Canada}
\date{July 2026}
\begin{document}
\begin{abstract}
Gray, Payne, Swisher, and Watson conjectured that, for fixed integers $j\geq 0$ and $k\geq 2$, the number $\FD_{j,k}(n)$ of partitions of perimeter $n$ having exactly $j$ part sizes of multiplicity at least $k$ is eventually at least the number $\FO_{j,k}(n)$ having exactly $j$ distinct part sizes divisible by $k$. We derive compact bivariate generating functions for both statistics from the profile-word encoding of a partition. For $k\geq 3$, coefficient extraction shows that fixing $j$ changes only the order of the dominant pole, not its location. The two dominant poles are positive real numbers $\rho_k$ and $\sigma_k$ determined by
\[
\rho_k+\rho_k^2+\cdots+\rho_k^k=1,
\qquad
\sigma_k^k=(1-\sigma_k)^{k-1}.
\]
We prove $\rho_k<\sigma_k$ for every $k\geq 3$. Consequently,
\[
\frac{\FD_{j,k}(n)}{\FO_{j,k}(n)}\longrightarrow+\infty,
\]
which proves the conjecture and gives a strict eventual inequality for $k\geq 3$. The case $k=2$ recovers the known exact identity.
\end{abstract}
\maketitle

\section{Introduction}
For a partition $\pi$, let $\alpha(\pi)$ be its largest part and let $\lambda(\pi)$ be its number of parts. Its \emph{perimeter} is
\[
\Gamma(\pi)=\alpha(\pi)+\lambda(\pi)-1,
\]
the largest hook length of its Ferrers diagram. Fixed-perimeter partition identities were initiated by Straub and developed further by Fu and Tang, among others; see \cite{FuTang,GrayEtAl,Straub}.

For a positive integer $s$, write $m_s(\pi)$ for the multiplicity of the part size $s$ in $\pi$. For integers $j\geq 0$ and $k\geq 2$, define
\begin{align*}
\FO_{j,k}(n)
&=\#\left\{\pi:\Gamma(\pi)=n,\ \#\{s\geq 1:k\mid s,\ m_s(\pi)>0\}=j\right\},\\
\FD_{j,k}(n)
&=\#\left\{\pi:\Gamma(\pi)=n,\ \#\{s\geq 1:m_s(\pi)\geq k\}=j\right\}.
\end{align*}
Thus $\FO_{j,k}(n)$ counts partitions of perimeter $n$ with exactly $j$ distinct occurring part sizes divisible by $k$, while $\FD_{j,k}(n)$ counts those with exactly $j$ part sizes occurring at least $k$ times.

Gray, Payne, Swisher, and Watson proved the exact identity
\[
\FO_{j,2}(n)=\FD_{j,2}(n)
\]
for all $j,n\geq 0$ in the applicable range and conjectured the following \cite[Conjecture 1.2]{GrayEtAl}.

\medskip
\noindent\textbf{Fixed-perimeter Franklin conjecture.}
For $j\geq 0$ and $k\geq 2$,
\[
\FD_{j,k}(n)\geq \FO_{j,k}(n)
\]
for all sufficiently large $n$.
\medskip

We prove a stronger result.

\begin{theorem}\label{thm:main}
For every fixed $j\geq 0$ and $k\geq 3$,
\[
\lim_{n\to\infty}\frac{\FD_{j,k}(n)}{\FO_{j,k}(n)}=+\infty.
\]
In particular,
\[
\FD_{j,k}(n)>\FO_{j,k}(n)
\]
for all sufficiently large $n$.
\end{theorem}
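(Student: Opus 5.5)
The plan is to encode each partition of perimeter $n$ by its profile word and compute bivariate generating functions
\[
F_{\FO}(x,u)=\sum_{n,j}\FO_{j,k}(n)u^jx^n,\qquad F_{\FD}(x,u)=\sum_{n,j}\FD_{j,k}(n)u^jx^n .
\]
The boundary of the Ferrers diagram of a partition with perimeter $n$ is a lattice path of $n+1$ unit steps from the top-right corner to the bottom-left. It starts with a horizontal (E) step and ends with a vertical (N) step, so it is determined by a free word of length $n-1$ in the middle. Part sizes correspond to positions of N-steps, with the part size equal to the number of E-steps to their left. The multiplicity of a part size is the length of the maximal run of N-steps at that position.

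\textbf{The $\FD$ generating function.} Multiplicity at least $k$ means an N-run of length $\ge k$. Decompose the word into alternating maximal runs and mark runs of length $\ge k$ by $u$. Each N-run then has weight
\[
\frac{x+\dots+x^{k-1}}{1}+u\frac{x^k}{1-x}.
\]
A transfer-matrix (runs) computation gives a rational function in $x$ whose denominator, at $u=1$, is $1-2x$. For the coefficient of $u^j$ we differentiate $j$ times at $u=0$, which yields a rational function whose only dominant singularity is the smallest root of the $u=0$ denominator, raised to order $j+1$. At $u=0$ the N-runs are restricted to lengths $<k$. Words with N-runs of length at most $k-1$ are counted by a denominator of the form $1-x-x^2-\dots-x^k$, with the boundary steps only affecting the numerator. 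Hence the pole is at $\rho_k$ with $\rho_k+\dots+\rho_k^k=1$.

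\textbf{The $\FO$ generating function.} Occurring part sizes divisible by $k$ correspond to N-runs placed after a number of E-steps divisible by $k$. Group the E-steps into blocks of $k$. Between consecutive multiples of $k$ there are $k$ slots where an N-run may be inserted, and the slot at the multiple gets weight $u$ when it is nonempty. At $u=0$ that slot is forced empty. A block of $k$ E-steps with $k-1$ free N-runs between them has generating function $x^k/(1-x)^{k-1}$. The free tail is geometric, so the denominator is $(1-x)^{k-1}-x^k$, giving the pole $\sigma_k$ with $\sigma_k^k=(1-\sigma_k)^{k-1}$. Extracting $[u^j]$ again gives a pole of order $j+1$ at $\sigma_k$. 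I must check that the $[u^j]$ numerators do not vanish at the pole, so that
\[
\FD_{j,k}(n)\sim c\,n^j\rho_k^{-n}\quad\text{and}\quad \FO_{j,k}(n)\sim c'n^j\sigma_k^{-n}
\]
with $c,c'>0$, and that no other singularity lies on the same circle. For the latter, aperiodicity and positivity together with Pringsheim's theorem suffice: both denominators have a unique simple dominant root.

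\textbf{Comparing the poles.} It remains to show $\rho_k<\sigma_k$, which makes the ratio grow like $n^{0}(\sigma_k/\rho_k)^n\to\infty$. Let $f(x)=x+\dots+x^k$. Since $f$ is increasing, it suffices to show $f(\sigma_k)>1$, i.e.
\[
\sigma_k\frac{1-\sigma_k^k}{1-\sigma_k}>1 .
\]
Substituting $\sigma^k=(1-\sigma)^{k-1}$ turns this into
\[
\sigma-\sigma(1-\sigma)^{k-1}>1-\sigma,\qquad\text{i.e.}\qquad 2\sigma-1>\sigma(1-\sigma)^{k-1}.
\]
Bounding $\sigma_k$ is the main obstacle. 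From $\sigma^k=(1-\sigma)^{k-1}$ we have $\sigma>1/2$ for $k\ge 3$: at $\sigma=1/2$ the left side $2^{-k}$ is less than the right side $2^{-(k-1)}$. I would first verify $k=3$ numerically ($\sigma_3\approx0.5698$, so $0.1396>0.1058$). For general $k$, $\sigma_k$ increases towards $1$ while $(1-\sigma)^{k-1}$ shrinks, so I would prove a lower bound such as $\sigma_k\ge 0.56$ by monotonicity in $k$. Then the left side is at least $0.12$, while the right side is at most $0.56\cdot 0.44^{2}$ for $k\ge 3$ after also using $\sigma\le 1$, which suffices with some care. The $k=2$ case gives $\sigma_2=\rho_2=(\sqrt5-1)/2$, consistent with the known identity and a useful sanity check of the generating functions.
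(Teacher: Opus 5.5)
\textbf{Gap in the root comparison.} Your overall plan matches the paper: profile words, generating functions affine in the marking variable, a pole of order $j+1$ at $\rho_k$ and at $\sigma_k$, then a root comparison. The decisive step, $\rho_k<\sigma_k$, rests on a false claim. You say $\sigma_k$ increases towards $1$ and propose the uniform bound $\sigma_k\ge 0.56$. In fact $\sigma_k$ decreases to $1/2$. Write $g_k(x)=x^k/(1-x)^{k-1}$. For $x>1/2$ we have $g_{k+1}(x)=g_k(x)\cdot\frac{x}{1-x}>g_k(x)$, so $g_{k+1}(\sigma_k)>1$ and hence $\sigma_{k+1}<\sigma_k$. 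Taking logarithms in $\sigma^k=(1-\sigma)^{k-1}$ gives $\sigma_k=\tfrac12+\tfrac{\ln 2}{4k}+O(k^{-2})$.

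Already $\sigma_4\approx 0.5497<0.56$, and $2\sigma_k-1\approx \ln 2/(2k)\to 0$. So your bound ``left side at least $0.12$'' fails for every $k\ge 4$, and no constant lower bound $\sigma_k\ge c>1/2$ can rescue it. Your reduced inequality $2\sigma_k-1>\sigma_k(1-\sigma_k)^{k-1}$ is true. Using the defining relation it is equivalent to $2\sigma_k-1>\sigma_k^{k+1}$, i.e.\ $1-2\sigma_k+\sigma_k^{k+1}<0$. Proving it along your route needs a $k$-dependent lower bound of order $1/k$ on $2\sigma_k-1$, set against the exponentially small right side, with the first few $k$ checked by hand. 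As written, the key lemma is unproved for all $k\ge 4$.

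\textbf{How the paper avoids this.} The paper tests at $\rho_k$ instead of at $\sigma_k$. From $\rho+\rho^2+\cdots+\rho^k=1$ one gets $1-\rho=\rho(1-\rho^k)$, hence $g_k(\rho)=\rho/(1-\rho^k)^{k-1}$. Bernoulli's inequality then reduces $g_k(\rho)<1$ to $(k-1)\rho^{k-1}+\rho^k<1$. The controlling quantity there is exponentially small in $k$, so the constant upper bound $\rho_k<3/5$ suffices, via $(3/5)^{k-1}(k-1+3/5)<1$ for $k\ge 3$. No $k$-dependent lower bound is needed.

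\textbf{Minor point.} You should also carry out the check you flagged: that the $[u^j]$ numerators are nonzero at the poles. The paper verifies that they equal $\rho_k^{(k+1)j-1}(1-\rho_k)^2$ and $U(\sigma_k)T(\sigma_k)^j$, both positive. This check is routine.
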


The proof has two parts. First, the profile-word encoding gives simple rational bivariate generating functions for the two statistics. Second, after extracting the coefficient of the marking variable, the dominant singularity for each fixed $j$ is independent of $j$. The conjecture therefore reduces to a comparison of two positive real roots.

\section{Profile words and two bivariate generating functions}
We use the profile encoding employed in \cite{FuTang,GrayEtAl}. A partition with largest part $\alpha$ can be written uniquely as
\begin{equation}\label{eq:multiplicity-form}
\pi=1^{m_1}2^{m_2}\cdots(\alpha-1)^{m_{\alpha-1}}\alpha^{m_\alpha+1},
\qquad m_1,\ldots,m_\alpha\geq 0.
\end{equation}
Its profile word is
\begin{equation}\label{eq:profile}
EN^{m_1}EN^{m_2}\cdots EN^{m_\alpha}N.
\end{equation}
Since $\lambda(\pi)=1+\sum_{i=1}^{\alpha}m_i$, the perimeter is
\begin{equation}\label{eq:perimeter}
\Gamma(\pi)=\alpha+\sum_{i=1}^{\alpha}m_i.
\end{equation}
Accordingly, in the generating functions below, a block $EN^m$ receives weight $q^{m+1}$ and the final $N$ receives weight $1$.

Define
\[
\cD_k(z,q)=\sum_{n\geq 1}\sum_{j\geq 0}\FD_{j,k}(n)z^jq^n,
\qquad
\cO_k(z,q)=\sum_{n\geq 1}\sum_{j\geq 0}\FO_{j,k}(n)z^jq^n.
\]

\begin{proposition}\label{prop:Dgf}
For every $k\geq 2$,
\begin{equation}\label{eq:Dgf}
\cD_k(z,q)=\frac{q\bigl(1-(1-z)q^{k-1}\bigr)}{1-2q+(1-z)q^{k+1}}.
\end{equation}
\end{proposition}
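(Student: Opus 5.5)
The plan is to read the generating function directly off the profile-word decomposition \eqref{eq:profile}. By \eqref{eq:multiplicity-form}, partitions are in bijection with tuples $(\alpha; m_1,\dots,m_\alpha)$ with $\alpha\ge 1$ and $m_i\ge 0$. By \eqref{eq:perimeter}, the perimeter is $\sum_{i=1}^{\alpha}(m_i+1)$, so it is additive over the blocks $EN^{m_i}$, each block contributing $q^{m_i+1}$. The statistic ``number of part sizes of multiplicity at least $k$'' is also additive over blocks. The one subtlety is that the largest part $\alpha$ has multiplicity $m_\alpha+1$ rather than $m_\alpha$. So for $i<\alpha$ the block contributes a factor $z$ exactly when $m_i\ge k$, while for the last block it contributes $z$ exactly when $m_\alpha\ge k-1$. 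Every $n\ge 1$ arises, since $\alpha\ge 1$ forces perimeter at least $1$, consistent with the range $n\ge 1$ in $\cD_k$.

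The first step is to compute the two block generating functions. A non-final block has weight
\[
B(z,q)=\sum_{m\ge 0} q^{m+1} z^{[m\ge k]}=\frac{q}{1-q}-(1-z)\frac{q^{k+1}}{1-q}=\frac{q-(1-z)q^{k+1}}{1-q},
\]
obtained by writing the sum over all $m$ and then correcting the tail $m\ge k$ by the factor $z-1$. The final block has weight
\[
L(z,q)=\sum_{m\ge 0} q^{m+1} z^{[m\ge k-1]}=\frac{q-(1-z)q^{k}}{1-q}.
\]

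Next, since the $\alpha-1$ non-final blocks are chosen independently and are followed by one final block (the trailing $N$ having weight $1$), summing over $\alpha\ge 1$ gives
\[
\cD_k(z,q)=\sum_{\alpha\ge 1}B^{\alpha-1}L=\frac{L}{1-B}.
\]
Multiplying numerator and denominator by $1-q$ gives denominator $1-q-q+(1-z)q^{k+1}=1-2q+(1-z)q^{k+1}$ and numerator $q-(1-z)q^k=q\bigl(1-(1-z)q^{k-1}\bigr)$, which is \eqref{eq:Dgf}.

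The only step needing care is the off-by-one in the last block: the representation \eqref{eq:multiplicity-form} stores the largest part's multiplicity as $m_\alpha+1$, so the threshold shifts from $k$ to $k-1$ there. This shift is exactly what produces the factor $q^{k-1}$ in the numerator. The remaining work is to justify the formal-power-series manipulations, which is immediate because $B$ has zero constant term in $q$.
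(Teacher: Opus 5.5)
Your proposal is correct and follows the paper's proof essentially verbatim. It uses the same interior and terminal block generating functions, with the threshold shifted to $k-1$ on the last block because the largest part has multiplicity $m_\alpha+1$, the same sequence decomposition $\cD_k=L/(1-B)$, and the same simplification after clearing the factor $1-q$.
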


\begin{proof}
For $i<\alpha$, the multiplicity of part size $i$ in \eqref{eq:multiplicity-form} is $m_i$. Hence the block $EN^{m_i}$ is marked by $z$ exactly when $m_i\geq k$. Its generating function is
\begin{align*}
I_k(z,q)
&=q\sum_{m\geq 0}q^m z^{[m\geq k]}\\
&=q\left(1+q+\cdots+q^{k-1}+z\frac{q^k}{1-q}\right)\\
&=\frac{q\bigl(1-(1-z)q^k\bigr)}{1-q}.
\end{align*}
The largest part $\alpha$ has multiplicity $m_\alpha+1$, so the terminal block $EN^{m_\alpha}$ is marked exactly when $m_\alpha\geq k-1$. Its generating function is
\begin{align*}
L_k(z,q)
&=q\sum_{m\geq 0}q^m z^{[m\geq k-1]}\\
&=\frac{q\bigl(1-(1-z)q^{k-1}\bigr)}{1-q}.
\end{align*}
A profile consists of an arbitrary sequence of interior blocks followed by one terminal block. Therefore
\[
\cD_k(z,q)=L_k(z,q)\sum_{r\geq 0}I_k(z,q)^r=\frac{L_k(z,q)}{1-I_k(z,q)}.
\]
Substitution and simplification give \eqref{eq:Dgf}.
\end{proof}

The next formula is the key compression: rather than treating residue classes one block at a time, we sum a complete period of $k$ possible part sizes.

\begin{proposition}\label{prop:Ogf}
Let
\[
A=A(q)=\frac{q}{1-q}.
\]
For every $k\geq 2$,
\begin{equation}\label{eq:Ogf}
\cO_k(z,q)
=
\frac{A+A^2+\cdots+A^{k-1}+zA^k}
{1-A^k\bigl(1-(1-z)q\bigr)}.
\end{equation}
\end{proposition}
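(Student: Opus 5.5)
The plan is to reuse the profile-word encoding \eqref{eq:profile} with the weights of the proof of \Cref{prop:Dgf}: each block $EN^{m_i}$ gets weight $q^{m_i+1}$ and the final $N$ gets weight $1$, so the total weight is $q^{\Gamma(\pi)}$ by \eqref{eq:perimeter}. Only the $z$-marking changes. In the multiplicity form \eqref{eq:multiplicity-form}, a part size $i<\alpha$ occurs if and only if $m_i>0$. The size $\alpha$ always occurs, since its multiplicity is $m_\alpha+1\geq 1$. So a block $EN^{m_i}$ with $i<\alpha$ gets the mark $z$ exactly when $k\mid i$ and $m_i\geq 1$. The terminal block $EN^{m_\alpha}$ gets the mark $z$ exactly when $k\mid\alpha$, whatever the value of $m_\alpha$.

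First I will compute the three block generating functions. An interior block at an index not divisible by $k$ gives $\sum_{m\geq0}q^{m+1}=A$. An interior block at an index divisible by $k$ gives
\[
q+z\sum_{m\geq1}q^{m+1}=q+zqA.
\]
Using $A(1-q)=q$, this equals $A\bigl(1-(1-z)q\bigr)$, because $A-Aq+zAq=q+zqA$. The terminal block gives $A$ if $k\nmid\alpha$ and $zA$ if $k\mid\alpha$.

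Next I will group the indices $1,\dots,\alpha$ into complete periods of length $k$. I write $\alpha=rk+t$ with $r\geq0$ and $1\leq t\leq k$; this decomposition is unique. The indices $1,\dots,rk$ are all interior, and they form $r$ full periods. Each full period has $k-1$ non-multiples and one multiple of $k$, so it contributes $A^{k-1}\cdot A\bigl(1-(1-z)q\bigr)=A^k\bigl(1-(1-z)q\bigr)$. The remaining indices $rk+1,\dots,rk+t-1$ are interior and not divisible by $k$, and the terminal index is $rk+t$. If $t<k$, this tail contributes $A^{t-1}\cdot A=A^t$. If $t=k$, it contributes $A^{k-1}\cdot zA=zA^k$.

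Finally I will sum over $r\geq0$ and $t\in\{1,\dots,k\}$. Because the encoding is a bijection, this gives
\[
\cO_k(z,q)=\sum_{r\geq0}\Bigl(A^k\bigl(1-(1-z)q\bigr)\Bigr)^r\bigl(A+\cdots+A^{k-1}+zA^k\bigr),
\]
and the geometric series yields \eqref{eq:Ogf}. The series converges as a formal power series since $A^k=O(q^k)$. The sum starts at $n\geq1$ because every partition here has $\alpha\geq1$. The step that needs the most care is the bookkeeping at the largest part. Its marking is by divisibility alone, with no condition on multiplicity. The case $t=k$ must be handled separately from $t<k$, so that the index $\alpha$ is never counted twice in a full period.
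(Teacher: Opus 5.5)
Your proof is correct and follows essentially the same route as the paper: you compute the same interior block weights $A$ and $A\bigl(1-(1-z)q\bigr)$, condition on the largest part, and split by its residue mod $k$. Your case $t=k$ is the paper's case $\alpha=tk$ (your $r$ is the paper's $t-1$), and your case $t<k$ is the paper's case $\alpha=tk+r$ with $1\le r\le k-1$. Your point that the terminal block is marked by divisibility alone is the paper's remark that the largest part ``necessarily occurs.''
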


\begin{proof}
An unmarked block $EN^m$ has generating function
\[
A=q\sum_{m\geq 0}q^m=\frac{q}{1-q}.
\]
Suppose $i<\alpha$ and $k\mid i$. The part size $i$ occurs exactly when $m_i\geq 1$. Thus the corresponding block has generating function
\begin{align*}
q\left(1+zq+zq^2+\cdots\right)
&=\frac{q\bigl(1-(1-z)q\bigr)}{1-q}\\
&=A\bigl(1-(1-z)q\bigr).
\end{align*}
Set
\[
s=1-(1-z)q.
\]
Hence each interior position divisible by $k$ contributes an extra factor $s$ relative to an ordinary block.

We now condition on the largest part $\alpha$. If
\[
\alpha=tk+r,
\qquad t\geq 0,
\qquad 1\leq r\leq k-1,
\]
then exactly $t$ integers among $1,\ldots,\alpha-1$ are divisible by $k$, and the largest part is not divisible by $k$. The profile weight is therefore
\[
A^{tk+r}s^t.
\]
If instead $\alpha=tk$ with $t\geq 1$, then exactly $t-1$ interior positions are divisible by $k$. The largest part itself is divisible by $k$ and necessarily occurs, so it contributes the marking factor $z$. The profile weight is
\[
zA^{tk}s^{t-1}.
\]
These two cases partition the positive integers $\alpha$, and hence
\begin{align*}
\cO_k(z,q)
&=\sum_{t\geq 0}\sum_{r=1}^{k-1}A^{tk+r}s^t
+\sum_{t\geq 1}zA^{tk}s^{t-1}\\
&=\frac{A+A^2+\cdots+A^{k-1}}{1-A^ks}
+\frac{zA^k}{1-A^ks},
\end{align*}
which is \eqref{eq:Ogf}.
\end{proof}

\begin{remark}\label{rem:sanity}
Setting $z=1$ in either \eqref{eq:Dgf} or \eqref{eq:Ogf} gives
\[
\frac{q}{1-2q}=\sum_{n\geq 1}2^{n-1}q^n,
\]
the generating function for all nonempty partitions by perimeter. When $k=2$, direct simplification of \eqref{eq:Dgf} and \eqref{eq:Ogf} gives the same bivariate rational function
\[
\frac{q\bigl(1-(1-z)q\bigr)}{1-2q+(1-z)q^3},
\]
recovering the exact identity $\FO_{j,2}(n)=\FD_{j,2}(n)$.
\end{remark}

\section{Fixing the number of marked part sizes}

For fixed $j$ and $k$, write
\[
D_{j,k}(q)=\sum_{n\geq 1}\FD_{j,k}(n)q^n,
\qquad
O_{j,k}(q)=\sum_{n\geq 1}\FO_{j,k}(n)q^n.
\]
The following formulas make the dominant-pole structure explicit.

\begin{proposition}\label{prop:fixedjD}
Let
\[
H_k(q)=1-2q+q^{k+1}.
\]
Then
\begin{equation}\label{eq:D0}
D_{0,k}(q)=\frac{q(1-q^{k-1})}{H_k(q)},
\end{equation}
and, for $j\geq 1$,
\begin{equation}\label{eq:Dj}
D_{j,k}(q)=\frac{q^{(k+1)j-1}(1-q)^2}{H_k(q)^{j+1}}.
\end{equation}
\end{proposition}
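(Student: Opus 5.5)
We extract the coefficient of $z^j$ from the rational function in Proposition~\ref{prop:Dgf}. Put $w=1-z$, so $z=1-w$. The denominator of \eqref{eq:Dgf} is then $H_k(q)-zq^{k+1}$, because
\[
1-2q+(1-z)q^{k+1}=1-2q+q^{k+1}-zq^{k+1}=H_k(q)-zq^{k+1}.
\]
The numerator is $q(1-q^{k-1})+zq^{k}$. Hence
\[
\cD_k(z,q)=\frac{q(1-q^{k-1})+zq^k}{H_k(q)\bigl(1-zq^{k+1}/H_k(q)\bigr)}.
\]

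We expand the last factor as a geometric series in $z$. This is legitimate as formal power series in $q$: $H_k(0)=1$, so $1/H_k$ is a power series, and $q^{k+1}/H_k$ has positive $q$-valuation. The expansion gives
\[
\cD_k(z,q)=\bigl(q(1-q^{k-1})+zq^k\bigr)\sum_{r\ge 0}\frac{z^r q^{(k+1)r}}{H_k^{r+1}}.
\]
The coefficient of $z^0$ is $q(1-q^{k-1})/H_k$, which is \eqref{eq:D0}.

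For $j\ge1$, the coefficient of $z^j$ receives two contributions:
\[
\frac{q(1-q^{k-1})q^{(k+1)j}}{H_k^{j+1}}+\frac{q^k\,q^{(k+1)(j-1)}}{H_k^{j}}
=\frac{q^{(k+1)j-1}\bigl[q^2(1-q^{k-1})+H_k(q)\bigr]}{H_k^{j+1}}.
\]
Here we used $q^k\cdot q^{(k+1)(j-1)}=q^{(k+1)j-1}$ and $q\cdot q^{(k+1)j}=q^{(k+1)j-1}\cdot q^2$.

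The bracket simplifies:
\[
q^2-q^{k+1}+1-2q+q^{k+1}=(1-q)^2,
\]
which yields \eqref{eq:Dj}.

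There is no real obstacle; the only care needed is in the exponent bookkeeping and in the cancellation inside the bracket. As a sanity check, summing over all $j$ at $z=1$ should recover Remark~\ref{rem:sanity}.
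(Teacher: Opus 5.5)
Your proposal is correct and matches the paper's proof step for step: you rewrite \eqref{eq:Dgf} as $\bigl(q(1-q^{k-1})+zq^k\bigr)/\bigl(H_k(q)-zq^{k+1}\bigr)$, expand geometrically in $z$, collect the two contributions to the $z^j$ coefficient, and simplify the bracket to $(1-q)^2$. Your remark that the geometric expansion is valid as formal power series (since $H_k(0)=1$ and $q^{k+1}/H_k$ has positive valuation) is a small justification the paper leaves implicit, while the substitution $w=1-z$ is never used and can be dropped.
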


\begin{proof}
Equation \eqref{eq:Dgf} can be rewritten as
\[
\cD_k(z,q)=\frac{q(1-q^{k-1}+zq^{k-1})}{H_k(q)-zq^{k+1}}.
\]
The coefficient of $z^0$ is \eqref{eq:D0}. For $j\geq 1$, expand
\[
\frac{1}{H_k-zq^{k+1}}=\sum_{r\geq 0}\frac{z^rq^{(k+1)r}}{H_k^{r+1}}.
\]
Thus
\begin{align*}
D_{j,k}(q)
&=\frac{q(1-q^{k-1})q^{(k+1)j}}{H_k^{j+1}}
+\frac{q^kq^{(k+1)(j-1)}}{H_k^j}\\
&=\frac{q^{(k+1)j-1}}{H_k^{j+1}}
\left(q^2(1-q^{k-1})+H_k\right).
\end{align*}
Since
\[
q^2(1-q^{k-1})+H_k(q)
=q^2-q^{k+1}+1-2q+q^{k+1}
=(1-q)^2,
\]
we obtain \eqref{eq:Dj}.
\end{proof}

For the $\FO$-side, define
\[
U=A+A^2+\cdots+A^{k-1},
\qquad V=A^k,
\]
\[
K_k(q)=1-A^k(1-q),
\qquad
T(q)=A^kq.
\]

\begin{proposition}\label{prop:fixedjO}
With the notation above,
\begin{equation}\label{eq:O0}
O_{0,k}(q)=\frac{U}{K_k},
\end{equation}
and, for $j\geq 1$,
\begin{equation}\label{eq:Oj}
O_{j,k}(q)
=
\frac{UT^j}{K_k^{j+1}}
+
\frac{VT^{j-1}}{K_k^j}.
\end{equation}
\end{proposition}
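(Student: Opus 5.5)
The plan mirrors the proof of \Cref{prop:fixedjD}: rewrite the denominator of \eqref{eq:Ogf} so that $z$ appears only linearly, and then expand in $z$ as a geometric series. Recall $s=1-(1-z)q=(1-q)+zq$. Then
\[
A^k s = A^k(1-q) + zA^kq,
\qquad\text{so}\qquad
1-A^ks = K_k(q)-zT(q).
\]
Hence \Cref{prop:Ogf} reads
\[
\cO_k(z,q)=\frac{U+zV}{K_k-zT}.
\]

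Next, treat everything as formal power series in $q$. Since $A=q+O(q^2)$, we have $A^k(1-q)=O(q^k)$, so $K_k$ has constant term $1$ and is invertible. Also $T=O(q^{k+1})$. This justifies the expansion
\[
\frac{1}{K_k-zT}=\sum_{r\ge 0}\frac{z^rT^r}{K_k^{r+1}}
\]
in $\mathbb{Z}[z][[q]]$. The coefficient of $z^r q^n$ involves only finitely many terms, because $T^r=O(q^{(k+1)r})$.

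Multiply this expansion by $U+zV$ and read off coefficients of $z$. The coefficient of $z^0$ comes only from $U\cdot 1/K_k$, which gives \eqref{eq:O0}. For $j\ge1$, there are two contributions. The first is $U$ times the $r=j$ term, namely $UT^j/K_k^{j+1}$. The second is $zV$ times the $r=j-1$ term, namely $VT^{j-1}/K_k^{j}$. Their sum is \eqref{eq:Oj}.

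There is no real obstacle. The only point needing care is the legitimacy of the formal expansion, that is, the invertibility of $K_k$ and the $q$-adic convergence of the series, and the order checks above settle it. No simplification analogous to the $(1-q)^2$ collapse on the $\FD$ side is claimed, so the two-term form is the final answer.
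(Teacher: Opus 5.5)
Your proposal is correct and follows the same argument as the paper: rewrite $1-A^ks$ as $K_k-zT$ so that $\cO_k=(U+zV)/(K_k-zT)$, expand geometrically in $z$, and read off the coefficients of $z^0$ and $z^j$. Your extra check that $K_k$ has constant term $1$ and that $T=O(q^{k+1})$, which makes the expansion legitimate in $\mathbb{Z}[z][[q]]$, is rigor the paper leaves implicit.
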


\begin{proof}
Since
\[
1-A^k\bigl(1-(1-z)q\bigr)=K_k-zT,
\]
we have
\[
\cO_k(z,q)=\frac{U+zV}{K_k-zT}.
\]
Expanding $(K_k-zT)^{-1}$ geometrically in $z$ gives \eqref{eq:O0} and \eqref{eq:Oj} by coefficient extraction.
\end{proof}

The common feature of \cref{prop:fixedjD,prop:fixedjO} is that fixing $j$ raises the order of a single basic denominator without moving its smallest positive zero.

\section{The two dominant roots}

The $\FD$ denominator factors as
\begin{equation}\label{eq:Hfactor}
H_k(q)=(1-q)\bigl(1-q-q^2-\cdots-q^k\bigr).
\end{equation}
Let $\rho_k\in(0,1)$ be the unique positive solution of
\begin{equation}\label{eq:rho}
\rho_k+\rho_k^2+\cdots+\rho_k^k=1.
\end{equation}

On the $\FO$-side,
\[
K_k(q)=1-\frac{q^k}{(1-q)^{k-1}}.
\]
Let $\sigma_k\in(0,1)$ be the unique positive solution of
\begin{equation}\label{eq:sigma}
\sigma_k^k=(1-\sigma_k)^{k-1}.
\end{equation}
Existence and uniqueness of both positive roots follow from strict monotonicity of the corresponding functions on $(0,1)$.

\begin{lemma}[Root gap]\label{lem:gap}
For every $k\geq 3$,
\[
\rho_k<\sigma_k.
\]
\end{lemma}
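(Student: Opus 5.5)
The plan is to compare the two roots through the monotone function $g(x)=x^k-(1-x)^{k-1}$. It is strictly increasing on $(0,1)$ and its unique zero there is $\sigma_k$, so it suffices to prove $g(\rho_k)<0$, that is,
\[
\rho_k^k<(1-\rho_k)^{k-1}.
\]

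The first step is to rewrite \eqref{eq:rho} using the factorization \eqref{eq:Hfactor}: $\rho_k$ is a root of $H_k$ different from $1$, so $\rho_k^{k+1}=2\rho_k-1$. Set $e=\rho_k^k$. Then $e=2-1/\rho_k$, so $\rho_k=1/(2-e)$ and $1-\rho_k=\rho_k(1-e)$. The target inequality becomes $\rho_k^k<\rho_k^{k-1}(1-e)^{k-1}$, which is equivalent to
\[
\frac{1}{2-e}<(1-e)^{k-1}.
\]
By Bernoulli's inequality, $(1-e)^{k-1}\geq 1-(k-1)e$. It therefore suffices to prove
\[
1-(k-1)e>\frac{1}{2-e}.
\]

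The second step, which is the main obstacle, is a quantitative upper bound on $e=\rho_k^k$. A bound of the form $e=O(1/k)$ is not sharp enough, because the constant matters. In fact $e$ is exponentially small, since $\rho_k\approx\tfrac12+2^{-k-2}$. I would use that $x+x^2+\cdots+x^k$ increases in $k$ for fixed $x\in(0,1)$, so $\rho_k$ is decreasing in $k$ and $\rho_k\leq\rho_3$ for $k\geq 3$. A direct check gives $0.55+0.55^2+0.55^3=1.018\ldots>1$, hence $\rho_3<0.55$. Consequently $e\leq 0.55^k$ for all $k\geq 3$.

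The third step is the final verification. Since $e\leq 0.55^3<0.167$, we get $1/(2-e)<0.546$, so it is enough to show $(k-1)\,0.55^k<0.454$. At $k=3$ the left side is about $0.333$. The ratio of consecutive terms is $0.55\,k/(k-1)\leq 0.825<1$ for $k\geq 3$, so the left side decreases in $k$, and the inequality holds for all $k\geq 3$. This gives $g(\rho_k)<0$ and hence $\rho_k<\sigma_k$.
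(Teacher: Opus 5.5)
Your proof is correct and follows the paper's route. Both use the relation $1-\rho_k=\rho_k(1-\rho_k^k)$ to reduce the claim to $\rho_k<(1-\rho_k^k)^{k-1}$, then apply Bernoulli's inequality and a numerical bound on $\rho_k$. The only difference is in the final estimate: you use the sharper $\rho_k\le\rho_3<0.55$ and check $(k-1)0.55^k<0.454$ directly, while the paper uses the cruder $\rho_k<3/5$ and compensates by rewriting the target as $(k-1)\rho_k^{k-1}+\rho_k^k<1$.
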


\begin{proof}
Fix $k\geq 3$ and write $\rho=\rho_k$. Let
\[
S_k(x)=x+x^2+\cdots+x^k.
\]
Since
\[
S_k\left(\frac12\right)=1-2^{-k}<1
\]
and
\[
S_k\left(\frac35\right)
>\frac35+\frac9{25}+\frac{27}{125}
=\frac{147}{125}>1,
\]
strict monotonicity gives
\begin{equation}\label{eq:rho-range}
\frac12<\rho<\frac35.
\end{equation}
From \eqref{eq:rho}, the geometric-sum identity gives
\[
\frac{\rho(1-\rho^k)}{1-\rho}=1,
\]
so
\begin{equation}\label{eq:rho-relation}
1-\rho=\rho(1-\rho^k).
\end{equation}

Define
\[
g_k(x)=\frac{x^k}{(1-x)^{k-1}},
\qquad 0<x<1.
\]
Then $g_k$ is strictly increasing and $g_k(\sigma_k)=1$. By \eqref{eq:rho-relation},
\begin{equation}\label{eq:grho}
g_k(\rho)=\frac{\rho}{(1-\rho^k)^{k-1}}.
\end{equation}
Bernoulli's inequality yields
\begin{equation}\label{eq:bern}
(1-\rho^k)^{k-1}\geq 1-(k-1)\rho^k.
\end{equation}
We show that the right-hand side of \eqref{eq:bern} is larger than $\rho$. By \eqref{eq:rho-relation}, the inequality
\[
1-(k-1)\rho^k>\rho
\]
is equivalent to
\begin{equation}\label{eq:keybound}
(k-1)\rho^{k-1}+\rho^k<1.
\end{equation}
Using \eqref{eq:rho-range},
\[
(k-1)\rho^{k-1}+\rho^k
<\left(\frac35\right)^{k-1}\left(k-1+\frac35\right).
\]
Set
\[
b_k=\left(\frac35\right)^{k-1}\left(k-1+\frac35\right).
\]
Then
\[
b_3=\frac{117}{125}<1,
\]
and, for $k\geq 3$,
\[
\frac{b_{k+1}}{b_k}=\frac35\frac{5k+3}{5k-2}<1,
\]
because $15k+9<25k-10$. Hence $b_k<1$ for every $k\geq 3$, proving \eqref{eq:keybound}. Combining this with \eqref{eq:bern} gives
\[
(1-\rho^k)^{k-1}>\rho.
\]
Equation \eqref{eq:grho} now implies $g_k(\rho)<1$. Since $g_k$ is strictly increasing and $g_k(\sigma_k)=1$, we conclude $\rho<\sigma_k$.
\end{proof}

We next verify that no complex root of equal modulus competes with either positive root.

\begin{lemma}\label{lem:rho-unique}
The number $\rho_k$ is the unique zero of $H_k(q)$ of modulus $\rho_k$, and it is a simple zero.
\end{lemma}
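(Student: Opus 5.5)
We work with the factorization \eqref{eq:Hfactor}, $H_k(q)=(1-q)P_k(q)$ where $P_k(q)=1-q-q^2-\cdots-q^k$. The factor $1-q$ vanishes only at $q=1$, whose modulus is $1>\rho_k$. So it suffices to show two things about $P_k$: that $\rho_k$ is its only zero on the circle $|q|=\rho_k$, and that this zero is simple.

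For simplicity, note that $P_k'(q)=-(1+2q+\cdots+kq^{k-1})$ is strictly negative for $q>0$. Hence $P_k'(\rho_k)\neq 0$, so $\rho_k$ is a simple zero of $P_k$. Since $1-\rho_k\neq 0$, it is also a simple zero of $H_k$.

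For uniqueness on the circle, suppose $P_k(q)=0$ with $|q|=\rho_k$. Then $q+q^2+\cdots+q^k=1$. Taking moduli and using the triangle inequality gives
\[
1=\Bigl|\sum_{i=1}^k q^i\Bigr|\le \sum_{i=1}^k |q|^i=\sum_{i=1}^k\rho_k^i=1 .
\]
So equality holds in the triangle inequality. Every nonzero term $q^i$ must therefore be a nonnegative real multiple of a common unit vector, and since the sum equals $1$, that direction must be the positive real axis. In particular $q=q^1$ is a positive real number with $|q|=\rho_k$, which forces $q=\rho_k$.

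This argument needs only the terms $q^1$ (and the positivity of the total), so it works for every $k\ge 2$; no aperiodicity argument beyond the presence of the linear term is required. The one point needing care is the equality case of the triangle inequality: equality for a sum of nonzero complex numbers holds exactly when all of them share the same argument. Here every $q^i$ is nonzero because $\rho_k>0$. I expect no real obstacle in this lemma.
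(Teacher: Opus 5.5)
Your proof is correct and is essentially the paper's argument: discard the factor $1-q$, force equality in the triangle inequality applied to $q+q^2+\cdots+q^k=1$, and get simplicity from $P_k'(\rho_k)<0$. There are two small differences. You identify the common direction from the sum being $1$, whereas the paper uses that $q$ and $q^2$ share an argument. The paper also runs the chain for $|q|\le\rho_k$, which excludes zeros strictly inside the disk; that stronger form is the one used later when $\rho_k$ is called the unique dominant singularity, and your argument gives it at once, since $|q|<\rho_k$ would make the second inequality strict.
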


\begin{proof}
Because $\rho_k<1$, the factor $1-q$ in \eqref{eq:Hfactor} does not vanish on $|q|\leq\rho_k$. Suppose
\[
1-z-z^2-\cdots-z^k=0
\]
with $|z|\leq\rho_k$. Then
\[
1=|z+z^2+\cdots+z^k|
\leq |z|+|z|^2+\cdots+|z|^k
\leq \rho_k+\rho_k^2+\cdots+\rho_k^k=1.
\]
Equality holds throughout. Thus $|z|=\rho_k$, and equality in the triangle inequality forces $z,z^2,\ldots,z^k$ to have the same argument. In particular, $z$ and $z^2$ have the same argument. Since $z\neq 0$, this forces $z$ to be positive real, so $z=\rho_k$.

Finally,
\[
\frac{d}{dq}\bigl(1-q-q^2-\cdots-q^k\bigr)
=-\sum_{i=1}^{k}iq^{i-1}<0
\]
at $q=\rho_k$, so the zero is simple.
\end{proof}

\begin{lemma}\label{lem:sigma-unique}
The number $\sigma_k$ is the unique zero of $K_k(q)$ of modulus $\sigma_k$, and it is a simple zero.
\end{lemma}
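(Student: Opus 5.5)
We will argue exactly as in \cref{lem:rho-unique}, after clearing the denominator of $K_k$. On $|q|<1$ the factor $(1-q)^{k-1}$ does not vanish. Hence the zeros of $K_k$ in the disc $|q|\leq\sigma_k$ are exactly the zeros there of the polynomial
\[
P_k(q)=(1-q)^{k-1}-q^k .
\]
Equivalently, they are the solutions of $w^k=w$ type equations after the substitution below. The natural first move is to use the variable $A=q/(1-q)$, which is the quantity the paper already works with.

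The plan is as follows. Suppose $q\neq 0$ satisfies $|q|\leq\sigma_k$ and $K_k(q)=0$. Then $q^k=(1-q)^{k-1}$, which can be rewritten as
\[
q=\left(\frac{1-q}{q}\right)^{k-1}q^{k}\cdot\frac{1}{q^{k-1}}\quad\text{i.e.}\quad \frac{q^k}{(1-q)^{k-1}}=1 .
\]
Writing $u=(1-q)/q$, this becomes $u^{k-1}=q$, and so $|u|^{k-1}=|q|\leq\sigma_k$.

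On the other hand, $|u|=|1-q|/|q|\geq (1-|q|)/|q|\geq(1-\sigma_k)/\sigma_k$, by the triangle inequality and monotonicity of $x\mapsto(1-x)/x$. Now $(1-\sigma_k)/\sigma_k=u_0$ is the positive value with $u_0^{k-1}=\sigma_k$, by \eqref{eq:sigma}. So
\[
\sigma_k\geq|u|^{k-1}\geq u_0^{k-1}=\sigma_k ,
\]
and equality holds throughout. In particular $|q|=\sigma_k$ and $|1-q|=1-|q|$. Equality in $|1-q|\geq 1-|q|$ forces $q$ to be a positive real number, so $q=\sigma_k$ by uniqueness of the positive root.

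For simplicity of the zero, we compute
\[
P_k'(q)=-(k-1)(1-q)^{k-2}-kq^{k-1},
\]
which is strictly negative at $q=\sigma_k\in(0,1)$. Since $K_k=P_k/(1-q)^{k-1}$ with a nonvanishing denominator near $\sigma_k$, the zero of $K_k$ at $\sigma_k$ is simple. Alternatively, $g_k$ is strictly increasing with $g_k'(\sigma_k)>0$, and $K_k=1-g_k$.

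The only delicate step is the equality case: we must make sure that equality in the chain really pins down the argument of $q$. This works because we reduce to the single inequality $|1-q|\geq 1-|q|$, whose equality case is exactly that $q$ lies on $[0,1)$. No argument about powers of $u$ sharing an argument is needed.
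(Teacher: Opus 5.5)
Your proof is correct and is essentially the paper's argument. The substitution $u=(1-q)/q$ only repackages the paper's chain $1=|q|^k/|1-q|^{k-1}\leq |q|^k/(1-|q|)^{k-1}$: it uses the same single estimate $|1-q|\geq 1-|q|$ with monotonicity, the same equality case forcing $q$ onto the positive real axis, and the same derivative computation for simplicity. One small slip, which you never use: your first display should read $q=\bigl(\frac{1-q}{q}\bigr)^{k-1}$, since as written its right-hand side equals $u^{k-1}q$.
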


\begin{proof}
If $K_k(z)=0$, then
\[
z^k=(1-z)^{k-1},
\]
and hence
\[
|z|^k=|1-z|^{k-1}.
\]
No zero can satisfy $|z|<\sigma_k$: indeed, if $|z|<\sigma_k<1$, then
\[
|1-z|\geq 1-|z|
\]
gives
\[
1=\frac{|z|^k}{|1-z|^{k-1}}
\leq \frac{|z|^k}{(1-|z|)^{k-1}}
<\frac{\sigma_k^k}{(1-\sigma_k)^{k-1}}=1,
\]
a contradiction.

If $|z|=\sigma_k$, the same inequalities must be equalities. Equality in $|1-z|\geq 1-|z|$ forces $z$ to lie on the nonnegative real axis. Therefore $z=\sigma_k$.

The function $q^k/(1-q)^{k-1}$ has strictly positive derivative on $(0,1)$, so $K_k'(\sigma_k)<0$. Thus the zero is simple.
\end{proof}

\section{Coefficient growth and proof of the theorem}

We use the following elementary rational-function coefficient estimate.

\begin{lemma}\label{lem:pole}
Let
\[
F(q)=\frac{P(q)}{Q(q)^m},
\qquad m\geq 1,
\]
be rational. Suppose $r>0$ is the unique singularity of $F$ of minimum modulus, $Q(r)=0$, $Q'(r)\neq 0$, and $P(r)>0$. Suppose also that $-rQ'(r)>0$. Then
\begin{equation}\label{eq:pole-asymp}
[q^n]F(q)
\sim
\frac{P(r)}{(-rQ'(r))^m}
\frac{n^{m-1}}{(m-1)!}\,r^{-n}.
\end{equation}
\end{lemma}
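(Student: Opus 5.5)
Since $F$ is rational, I will use a partial fraction decomposition and isolate the principal part at $r$. Write $Q(q)=(q-r)\,Q_1(q)$ with $Q_1$ a polynomial (or rational function) satisfying $Q_1(r)=Q'(r)\neq 0$. Near $q=r$ we then have
\[
F(q)=\frac{P(q)}{Q_1(q)^m\,(q-r)^m},
\]
so $F$ has a pole of order exactly $m$ at $r$. Its leading Laurent coefficient is $P(r)/Q'(r)^m$, which is nonzero because $P(r)>0$. Rewriting $(q-r)^{-m}=(-r)^{-m}(1-q/r)^{-m}$, the principal part is
\[
\frac{P(r)}{(-rQ'(r))^m}\left(1-\frac{q}{r}\right)^{-m}+\sum_{i=1}^{m-1}\frac{c_i}{(1-q/r)^{i}}
\]
for some constants $c_i$.

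The first step is the coefficient extraction from the principal part. The expansion
\[
[q^n](1-q/r)^{-i}=\binom{n+i-1}{i-1}r^{-n}\sim\frac{n^{i-1}}{(i-1)!}r^{-n}
\]
shows that the top-order term gives exactly the right-hand side of \eqref{eq:pole-asymp}. The lower-order terms contribute $O(n^{m-2}r^{-n})$. The hypothesis $-rQ'(r)>0$ ensures that the constant $(-rQ'(r))^m$ is positive, so the leading term is a positive real quantity and the asymptotic equivalence makes sense.

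The second step treats the remainder $G=F-(\text{principal part at }r)$. Since $r$ is the unique singularity of minimum modulus, $G$ is rational with all its poles in $|q|\geq R$ for some $R>r$; if $G$ has no poles at all it is a polynomial. In either case the standard partial-fraction bound gives $[q^n]G=O(n^{d}R^{-n})$, which is $o(n^{m-1}r^{-n})$. Adding the two contributions yields \eqref{eq:pole-asymp}.

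The only delicate point is keeping the normalization straight in the first step: converting $(q-r)^{-m}$ to $(1-q/r)^{-m}$ produces the factor $(-r)^{-m}$, which combines with $Q'(r)^{-m}$ to give $(-rQ'(r))^{-m}$. A related subtlety is that the apparent pole at $r$ must not be cancelled by $P$, and $P(r)\neq 0$ rules this out. Apart from this, the argument is the routine transfer from a dominant pole to coefficient asymptotics.
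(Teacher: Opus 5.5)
Your proposal is correct and follows essentially the same route as the paper: you isolate the order-$m$ pole at $r$, extract $\frac{P(r)}{(-rQ'(r))^m}\binom{n+m-1}{m-1}r^{-n}$ from the top term of the principal part, and bound everything else by partial fractions using the gap to the next singularity. The only cosmetic difference is that you subtract the full Laurent principal part in powers of $(q-r)$, whereas the paper factors $Q(q)=B(q)(1-q/r)$ with $B(r)=-rQ'(r)$ and peels off $C(r)(1-q/r)^{-m}$ directly.
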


\begin{proof}
Because $r$ is a simple zero of $Q$, there is a function $B$, analytic near $r$ with $B(r)\neq 0$, such that
\[
Q(q)=B(q)\left(1-\frac qr\right).
\]
Differentiating at $q=r$ gives
\[
B(r)=-rQ'(r)>0.
\]
Hence, near $r$,
\[
F(q)=C(q)\left(1-\frac qr\right)^{-m},
\qquad
C(q)=\frac{P(q)}{B(q)^m},
\]
where $C$ is analytic and
\[
C(r)=\frac{P(r)}{(-rQ'(r))^m}.
\]
Write
\[
C(q)=C(r)+\left(1-\frac qr\right)E(q)
\]
with $E$ analytic near $r$. The principal term is therefore
\[
C(r)\left(1-\frac qr\right)^{-m},
\]
whose $q^n$-coefficient is
\[
C(r)\binom{n+m-1}{m-1}r^{-n}.
\]
The remaining local term has pole order at most $m-1$ at $r$, and all other singularities have modulus strictly larger than $r$. Partial fractions therefore show that the remaining coefficient is
\[
O(n^{m-2}r^{-n})+O(R^{-n})
\]
for some $R>r$, with the first term absent when $m=1$. Since
\[
\binom{n+m-1}{m-1}\sim\frac{n^{m-1}}{(m-1)!},
\]
\eqref{eq:pole-asymp} follows.
\end{proof}

\begin{proof}[Proof of \cref{thm:main}]
Fix $j\geq 0$ and $k\geq 3$.

By \cref{prop:fixedjD,lem:rho-unique}, the unique dominant singularity of $D_{j,k}(q)$ is $q=\rho_k$. For $j=0$, \eqref{eq:D0} has a simple pole there and its numerator is positive. For $j\geq 1$, \eqref{eq:Dj} has a pole of exact order $j+1$, since
\[
\rho_k^{(k+1)j-1}(1-\rho_k)^2>0.
\]
Moreover $H_k'(\rho_k)<0$. By \cref{lem:pole}, there is a constant $C_{D,j,k}>0$ such that
\begin{equation}\label{eq:D-asymp}
\FD_{j,k}(n)\sim C_{D,j,k}n^j\rho_k^{-n}.
\end{equation}

For the $\FO$-side, note that $A,U,V,T$ are analytic on $|q|<1$. Since $\sigma_k<1$, \cref{lem:sigma-unique} and \eqref{eq:O0} show that $O_{0,k}(q)$ has a unique dominant simple pole at $q=\sigma_k$, with positive numerator $U(\sigma_k)$.

For $j\geq 1$, \eqref{eq:Oj} can be written over a common denominator as
\[
O_{j,k}(q)
=
\frac{UT^j+VT^{j-1}K_k}{K_k^{j+1}}.
\]
At $q=\sigma_k$ the numerator equals
\[
U(\sigma_k)T(\sigma_k)^j>0.
\]
Hence $O_{j,k}(q)$ has a pole of exact order $j+1$ at $\sigma_k$. Also $K_k'(\sigma_k)<0$. Applying \cref{lem:pole} gives a constant $C_{O,j,k}>0$ such that
\begin{equation}\label{eq:O-asymp}
\FO_{j,k}(n)\sim C_{O,j,k}n^j\sigma_k^{-n}.
\end{equation}

Dividing \eqref{eq:D-asymp} by \eqref{eq:O-asymp},
\[
\frac{\FD_{j,k}(n)}{\FO_{j,k}(n)}
\sim
\frac{C_{D,j,k}}{C_{O,j,k}}
\left(\frac{\sigma_k}{\rho_k}\right)^n.
\]
By \cref{lem:gap}, $\sigma_k/\rho_k>1$. Therefore the right-hand side tends to $+\infty$, proving the theorem.
\end{proof}

\begin{corollary}\label{cor:conjecture}
For every $j\geq 0$ and $k\geq 2$, there is an integer $N=N(j,k)$ such that
\[
\FD_{j,k}(n)\geq\FO_{j,k}(n)
\qquad(n\geq N).
\]
For $k\geq 3$, the inequality is strict for all sufficiently large $n$.
\end{corollary}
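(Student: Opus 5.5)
The corollary splits into the cases $k\geq 3$ and $k=2$, and each follows from results already established.

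For $k\geq 3$ I would invoke \cref{thm:main} directly. Since $\FD_{j,k}(n)/\FO_{j,k}(n)\to+\infty$, there is an $N$ with $\FD_{j,k}(n)>\FO_{j,k}(n)$ for all $n\geq N$. This gives the strict inequality and hence also the non-strict one. One technical point needs care: the ratio must be defined for large $n$, so I need $\FO_{j,k}(n)>0$ eventually. This follows from the asymptotic \eqref{eq:O-asymp} with $C_{O,j,k}>0$, which the proof of the theorem already supplies. So no new work is required in this case.

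For $k=2$ I would use \cref{rem:sanity}. There, substituting $k=2$ into \eqref{eq:Dgf} and \eqref{eq:Ogf} yields the same rational function $q(1-(1-z)q)/(1-2q+(1-z)q^3)$. Equal bivariate generating functions have equal coefficients of $z^jq^n$, so $\FD_{j,2}(n)=\FO_{j,2}(n)$ for all $j$ and $n$. The inequality $\geq$ therefore holds with $N=1$. I would also check the simplification of \eqref{eq:Ogf} at $k=2$. With $A=q/(1-q)$, the numerator is $A+zA^2$ and the denominator is $1-A^2(1-(1-z)q)$. Multiplying through by $(1-q)^2$ gives numerator $q(1-q)+zq^2=q(1-(1-z)q)$ and denominator $(1-q)^2-q^2+(1-z)q^3=1-2q+(1-z)q^3$, which matches \eqref{eq:Dgf}.

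The only step that needs any checking is this $k=2$ algebra. Even that is not essential, because the identity $\FO_{j,2}=\FD_{j,2}$ is the cited theorem of Gray, Payne, Swisher, and Watson. The case $k=2$ must be treated separately because \cref{lem:gap} fails there: $\rho_2=\sigma_2$, both being the golden-ratio conjugate. In that case only the non-strict inequality, in fact equality, is claimed.
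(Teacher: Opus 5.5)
Your proposal is correct and follows the paper's proof: the case $k\geq 3$ comes from \cref{thm:main}, and the case $k=2$ comes from the coincidence of the two bivariate generating functions in \cref{rem:sanity} (equivalently, the cited theorem of Gray, Payne, Swisher, and Watson). Your extra checks are accurate refinements of the same argument: the explicit $k=2$ simplification, the eventual positivity of $\FO_{j,k}(n)$ via \eqref{eq:O-asymp}, and the observation that $\rho_2=\sigma_2$.
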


\begin{proof}
For $k=2$, the equality follows from \cref{rem:sanity}, equivalently from \cite[Theorem 1.1]{GrayEtAl}. For $k\geq 3$, \cref{thm:main} implies that the ratio $\FD_{j,k}(n)/\FO_{j,k}(n)$ is eventually greater than $1$.
\end{proof}

\section{Concluding remark}

The proof isolates a simple structural principle. For each fixed number $j$ of marked part sizes, coefficient extraction in the marking variable raises the order of the dominant pole by $j$ but leaves its location unchanged. Thus the eventual comparison is controlled entirely by the two unmarked profile kernels
\[
1-q-q^2-\cdots-q^k
\qquad\text{and}\qquad
1-\frac{q^k}{(1-q)^{k-1}}.
\]
The root gap $\rho_k<\sigma_k$ then creates an exponential separation between the two fixed-$j$ statistics. This suggests that other fixed-perimeter statistics encoded by periodic profile weights may admit similar eventual comparisons through their zero-defect growth kernels.

\end{document}